\def\e{{\rm e}}
\def\cic{\mathbf}
\def\eps{\varepsilon}
\def\d{{\rm d}}
\def\R {\mathbb{R}}
\def\N {\mathbb{N}}
\def\D {{\mathcal D}}
\def\I {{\mathcal I}}
\def\F {{\mathcal F}}
\def\Ws {{\mathrm W}}
\def\Tor {{\mathbb T}}
\def\S{{\mathbf S}}
\def\size{{\mathrm{size}}}
\def\dense{{\mathrm{dense}}}
\def\tops{{\mathrm{tops}}}
\def\T{{\mathbf{T}}}
\def\Tor{{\mathbb{T}}}
\def\M {{\mathsf M}}
\def\Wm {{\mathsf W}}
\def \l {\langle}
\def \r {\rangle}
\def \and{\qquad\text{and}\qquad}
\def \no#1#2#3 {{\bf #1} (#3), #2.}
\def \eds#1#2#3 {#1, #2, #3.}
\newtheorem{proposition}{Proposition}
\newtheorem{theorem}{Theorem}
\newtheorem{lemma}[theorem]{Lemma}
\theoremstyle{definition}
\newtheorem*{remark}{Remark}
\numberwithin{theorem}{section}
 \title[Weak-$L^p$ bounds for   Carleson operators]{Weak-$L^p$ bounds for the Carleson and Walsh-Carleson operators}
\author[F.\ Di Plinio]{Francesco Di Plinio}
\address{INdAM - Cofund Marie Curie Fellow at Dipartimento di Matematica, \newline \indent Universit\`a degli Studi di Roma ``Tor Vergata'', \newline  \indent Via della Ricerca Scientifica,   00133 Roma,  Italy   \newline \indent \centerline{and}   \indent
 The Institute for Scientific Computing and Applied Mathematics,
Indiana University
\newline\indent
831 East Third Street, Bloomington, Indiana  47405, U.S.A. }
\email{diplinio@mat.uniroma2.it   }
\subjclass{42B20}
 \keywords{Carleson operator, multi-frequency Calder\'on-Zygmund decomposition, pointwise convergence}
\thanks{The    author is an INdAM - Cofund Marie Curie Fellow and is  partially
supported by the National Science Foundation under the grant
   NSF-DMS-1206438, and by the Research Fund of Indiana University.}
\begin{document}
\begin{abstract}
 
 We prove a weak-$L^p$ bound for  the Walsh-Carleson  operator for $p $ near 1, improving  on a theorem   of Sj\"olin \cite{SJ}. We  relate our result   to the conjectures that the Walsh-Fourier and Fourier series of a function $f\in L\log L(\mathbb T)$ converge for almost every $x \in \Tor$.
 
\end{abstract}
 \maketitle

\section{Motivation and main result}\label{sec1}
The $L^p(\mathbb{T})$, $1<p<\infty$ boundedness of the Carleson maximal operator
$$ \mathrm{C} f(x) =  \sup_{n \in \mathbb N} \bigg|\mathrm{p.v.} \int_{\mathbb T} f(x-t) \e^{2 \pi i n t} \frac{\d t}{t}\bigg|, \qquad x \in \mathbb{T},
$$
first proved in \cite{CARL,H}, entails as a consequence the almost everywhere convergence of the sequence $S_n f$ of partial Fourier sums for each $f \in L^p(\mathbb T)$. A natural question (posed for instance by Konyagin in \cite{K2}) is  whether, given an Orlicz function $\Phi(t)$ such that $L^1(\mathbb T) \subsetneq L^\Phi(\mathbb T) \subsetneq L^p(\mathbb T)$ for all $p>1$, it is true that 
\begin{equation} \label{conj}
 \|\mathrm C f\|_{1,\infty} \leq c \|f\|_{L^\Phi(\mathbb T)},  \end{equation}
 so that (equivalently) $S_n f$ converges almost everywhere to $f$ whenever $f \in L^\Phi(\Tor)$.
It is a result of Antonov \cite{A} that \eqref{conj} holds true for $\Phi(t)=t \log(\e+t)\log\log\log(\e^{\e^\e}+t)$. Antonov's proof makes use  of an approximation technique relying on the smoothness of the Dirichlet kernels to upgrade
 the restricted weak-type estimate of Hunt \cite{H}
 \begin{equation}
\label{hunt}
\|\mathrm{C} \cic{1}_E\|_{ p,\infty } \leq c \frac{p^2}{p-1} |E|^{\frac1p}\qquad \forall\, E\subset \mathbb T, \qquad \forall\, 1<p<\infty,
\end{equation}
to the mixed bound
\begin{equation}
\label{mixed}
\|\mathrm{C} f\|_{ 1,\infty } \leq c\|f\|_{1} \log \Big(\e +\frac{\|f\|_{\infty}}{\|f\|_{1}}\Big),
\end{equation}
which, in turn, yields that $\mathrm{C}: L\log L\log\log\log L(\mathbb T) \to L^{1,\infty}(\mathbb T)$, in view of the log-convexity of the latter space. We remark that a larger quasi-Banach rearrangement invariant space $QA$ such that  $\mathrm{C}: QA \to L^{1,\infty}(\mathbb T)$ holds was found in \cite{ADR}; in \cite{CMR} it is shown that, however, Antonov's space is the largest (in a suitable sense) Orlicz space $L^\Phi(\mathbb T)$ such that the embedding $L^\Phi(\mathbb T)\hookrightarrow QA$ holds. We further note that the results of \cite{A,ADR} have been reproved by Lie \cite{LIE2}, where \eqref{mixed} is obtained directly, without the use of approximation techniques.

The above mentioned results strongly suggest that \eqref{conj} holds for the space $L\log L(\mathbb T)$ as well. If the ``$L\log L$  conjecture'' were true, a consequence would be the  unrestricted version of   Hunt's estimate \eqref{hunt}:\begin{equation}
\label{main}
\|\mathrm{C} f\|_{ p,\infty } \leq  \frac{c}{p-1}\|f\|_{p},\qquad \forall\, 1<p\leq2.
\end{equation}
 On the other hand,   a suitable choice of $p \in (1,2)$ in \eqref{main} yields \eqref{mixed} directly, and in turn, recovers \eqref{conj} for Antonov's $\Phi$; thus, the weak-$L^p$ estimate \eqref{main} arises naturally as an intermediate result between the conjectured $L\log L(\mathbb T)$ bound in \eqref{conj} and the presently known best Orlicz space bound. That the $L\log L$ conjecture implies \eqref{main} is a particular case of the following observation, due to Andrei Lerner (personal communication). Assuming \eqref{conj} holds for a given $\Phi$, one has the pointwise inequality $ M^\#( |\mathrm{C} f|^\frac12) \leq (M_\Phi f)^\frac12$, the latter being the local Orlicz maximal function associated to $\Phi$ \cite[Proposition 5.2]{GMS}. It follows that 
\begin{equation}
\label{andrei}
\| \mathrm{C} f\|_{ p,\infty }\leq c \big\|\big(M^\#( |\mathrm{C} f|^\frac12)\big)^2\big\|_{ p,\infty } \leq c \|M_\Phi f\|_{p,\infty} \leq c \Big(\sup_{t \geq 1} \frac{\Phi(t)}{t^p}\Big)^{\frac1p} \|f\|_p, \qquad \forall\,1<p\leq 2.
\end{equation} 
 Using Antonov's $\Phi(t)=t\log(\e+t) \log\log\log(\e^{\e^\e}+t)$ in \eqref{andrei} leads to 
\begin{equation}
\label{toquote}
\|\mathrm C f\|_{{p,\infty } } \leq  \frac{c}{p-1} \log\log \big(\e^\e+\textstyle \frac{1}{p-1}\big)\|f\|_{p},\qquad \forall 1<p\leq 2;
\end{equation}
to the best of the author's knowledge, there seems to be no better weak-$L^p$ bound than \eqref{toquote} in the current literature, and in particular the validity of \eqref{main}, which can be thought of as a weakening of the $L\log L$ conjecture, is open.

The main new result of this article is that the analogue of \eqref{main} actually holds for the Walsh-Fourier analogue of the Carleson operator, which is often thought of as a discrete model of the Fourier case: see \cite[Chapter 8]{ThWp} for the relevant definitions.
\begin{theorem} \label{walshfull} Denote by $\Ws_n f(x)$ the $n$-th partial Walsh-Fourier sum of $f \in L^1(\mathbb T)$.
 There exists an absolute constant $c>0$ such that
  the Walsh-Carleson maximal operator
  $$
  \Ws  f(x):= \sup_{n \in \N} |\Ws_{n}f(x)|, \qquad x \in \Tor$$
satisfies the operator norm bound
\begin{equation}
\label{wfweak} \|\Ws  \|_{L^{p}(\Tor) \to L^{p,\infty }(\Tor)} \leq \frac{c}{p-1}, \qquad \forall \,1<p\leq 2.
\end{equation}
\end{theorem}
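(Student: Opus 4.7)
The plan is to establish the distributional bound
\begin{equation*}
|\{\Ws f>\lambda\}|\leq (c(p-1)^{-1})^p \lambda^{-p}\|f\|_p^p,\qquad \lambda>0,
\end{equation*}
for all $1<p\leq 2$, via a Calderón-Zygmund decomposition in which the bad part is controlled by Hunt's restricted weak-type inequality \eqref{hunt} applied at an exponent $q\in(1,p)$ chosen so that $q-1\sim p-1$; this tunes the loss $cq^2/(q-1)$ in \eqref{hunt} to exactly the target $c(p-1)^{-1}$.

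Normalize $\|f\|_p=1$ and fix $\lambda>0$, then apply the standard $L^p$ Calderón-Zygmund decomposition of $f$ at height $\sim\lambda$: write $f=g+\sum_{Q\in\Q}b_Q$, with $\|g\|_\infty\lesssim\lambda$, $\|g\|_p\leq 1$, the $b_Q$ supported on pairwise disjoint dyadic intervals $Q\subset\Tor$ and of mean zero, and the exceptional set $\Omega=\bigcup_{Q\in\Q}Q$ of measure $\lesssim\lambda^{-p}$. The $L^2(\Tor)$-boundedness of $\Ws$ (absolute constant, by the Carleson-Hunt theorem at $p=2$), together with Chebyshev and the interpolation $\|g\|_2^2\leq\|g\|_\infty^{2-p}\|g\|_p^p$, yields $|\{\Ws g>\lambda/2\}|\lesssim\lambda^{-p}$. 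The contribution of $\Omega$ to the bad part has measure $\lesssim\lambda^{-p}$ automatically, so the core of the argument is to bound $|\{x\in\Tor\setminus\Omega: |\Ws b(x)|>\lambda/4\}|$.

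At this stage the dyadic time-frequency structure of the Walsh system enters. Linearizing the supremum via a measurable selector $N\colon\Tor\to\N$ and invoking Fine's identity, I would expand $\Ws_{N(x)}b(x)$ as a sum of at most $O(\log N(x))$ modulated conditional expectations $W_{M_k(x)}(x)\,\mathbb{E}_{K_k(x)}(W_{M_k(x)} b)(x)$. The mean-zero property of each $b_Q$ kills every such term at scales coarser than $|Q|$ and confines the surviving contributions to dyadic ancestors of $Q$, producing a tile sum supported off $\Omega$ that can be dualized against the indicator $\ind_{\Tor\setminus\Omega}$. Hunt's bound \eqref{hunt} at exponent $q\in(1,p)$ with $q-1\sim p-1$ then supplies the factor $cq^2/(q-1)\sim c(p-1)^{-1}$, while the remaining $L^q$ moments of $b$ are controlled by Hölder's inequality combined with the CZ $L^p$ bound $\sum_Q\|b_Q\|_p^p\lesssim 1$ and the measure estimate $|\Omega|\lesssim\lambda^{-p}$.

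The main obstacle is the coupling in the previous paragraph: the linearized tile sum over the modulated Haar-scale contributions to $\Ws_{N(x)}$ must be matched with \eqref{hunt} without introducing an additional $\log(1/(p-1))$ factor from summing across scales, which is precisely the logarithmic loss that yields only \eqref{toquote} in the Fourier-Carleson case. In the Walsh setting this logarithm can be absorbed because the frequency decomposition is genuinely dyadic, so the tile-level sum reorganizes into $O(1)$ trees per scale with controllable overlaps. This dyadic cleanness is what allows the announced norm bound $c(p-1)^{-1}$ to be attained, with the remaining verification being routine bookkeeping of constants.
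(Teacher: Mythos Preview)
Your proposal has a genuine gap at precisely the point you flag as ``the main obstacle''. The assertion that the surviving tile sum for the bad part can be ``dualized against $\cic{1}_{\Tor\setminus\Omega}$'' and controlled by Hunt's restricted bound \eqref{hunt} at some $q$ with $q-1\sim p-1$ is not an argument: \eqref{hunt} applies only to characteristic functions, and $b=\sum_Q b_Q$ is neither an indicator nor a bounded multiple of one. If instead you feed $b$ into a \emph{strong} $L^q$ bound for $\Ws$, the best available constant near $q=1$ is of order $(q-1)^{-2}$ (interpolating \eqref{hunt} with $L^2$), which only returns $(p-1)^{-2}$ after Chebyshev. Your final paragraph effectively claims that the Walsh dyadic structure collapses the remaining log-loss to $O(1)$ by ``routine bookkeeping''; but if a standard single-frequency Calder\'on--Zygmund step plus \eqref{hunt} sufficed, the same outline would run in the Fourier case and prove \eqref{main}, which is open. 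The Walsh-specific input has to be identified, and you have not done so.

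What the paper actually does is different in kind. After passing to model sums and removing an exceptional set $\{\M_p f\gtrsim 1\}$, the remaining bitiles are organized via the \emph{density} decomposition into forests $\F_\delta$ with $\dense_G(\F_\delta)\leq\delta$ and $\tops(\F_\delta)\lesssim\delta^{-1}|G|$. The new step is a \emph{multi-frequency} Calder\'on--Zygmund lemma (Lemma~\ref{CZ}): on each maximal bad interval $I$ one projects $f\cic{1}_I$ onto the span of the (at most $N_I$) Walsh wave packets adapted to $I$ that are $\ll$-below some tree in $\F_\delta$, producing $h_\delta$ with the same wavelet coefficients as $f$ on $\F_\delta$ and $\|h_\delta\|_2\lesssim\delta^{-1/2+1/p'}$; here $N_I$ is the number of tree tops over $I$, and the $\tops$ bound is what controls $\sum_I |I|N_I^{1-2/p'}$. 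Feeding $h_\delta$ into the $L^2$ tree estimate (Lemma~\ref{tree-est}) gives $|\langle\Wm_{\F_\delta}f,g\rangle|\lesssim\delta^{1/p'}$, which sums geometrically over $\delta\in 2^{-\N}$ to $p'$ with no logarithm. The multi-frequency projection---not the mean-zero cancellation of a classical CZ bad part---is the mechanism that your sketch is missing.
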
 
\begin{remark}[Previous results and sharpness]Theorem \ref{walshfull} is a strengthening of the Walsh analogue of \eqref{hunt}, obtained by Sj\"olin in  \cite{SJ}, and recovers the correspondent version of \eqref{mixed}, first established in \cite{SS}, without the need for approximation techiques developed therein. The bound
$
\Ws:L\log L\log\log\log L(\mathbb T) \to L^{1,\infty}(\mathbb T),
$
which is the Walsh case of Antonov's result, follows as a further consequence.
Furthermore, if we assume that  the Walsh case of the $L\log L$ conjecture is sharp,  in the sense that there exists no Young function  $\Phi$  with $\Ws: L^{\Phi}(\mathbb T) \to L^{1,\infty}(\Tor)$ and such that 
$ 
\limsup_{t\to \infty} (t\log(\e+t))^{-1}{\Phi(t)} =0 
$,
then the bound \eqref{wfweak} is sharp, up to a doubly logarithmic term in $(p-1)^{-1}$;
 see \cite[Section 2]{DP} for details.
\end{remark}
\begin{remark}[The $L\log\log L$ conjecture and weak-$L^p$ bounds for the lacunary Carleson   operator] \label{secrem}
It is conjectured in \cite[Conjecture 3.2]{K2}  that the subsequence $S_{n_j} f$ of the partial Fourier sums of $f \in L\log\log L(\Tor)$ converges almost everywhere whenever $n_{j}$ is a lacunary sequence of integers, in the sense that $n_{j+1}\geq \theta n_j$  for all $j$ and for some $\theta >1$; if true, this result would be sharp. This is equivalent to the conjecture that the \emph{lacunary}  Carleson maximal  operator $$ \mathrm{C}_{\{n_j\}} f(x) =  \sup_{j \in \mathbb N} \bigg|\mathrm{p.v.} \int_{\mathbb T} f(x-t) \e^{2 \pi i n_j t} \frac{\d t}{t}\bigg|, \qquad x \in \mathbb{T},
$$
satisfies
\begin{equation} \label{conjlac}
 \|\mathrm C_{\{n_j\}} f\|_{1,\infty} \leq c \|f\|_{L^\Phi(\mathbb T)},  \end{equation}
for $\Phi(t)=t\log\log(\e^\e+t)$,
with constant $c>0$ depending only on the lacunarity constant $\theta$ of the sequence $\{n_j\}$. 
By  \eqref{andrei}, if  the  above  conjectured bound held true,   the weak-$L^p$ estimate   
\begin{equation}
\label{mainlac}
\|\mathrm{C}_{\{n_j\}} f\|_{ p,\infty } \leq c \log(\e+(p-1)^{-1})\|f\|_{p},\qquad \forall 1<p\leq2
\end{equation}
would follow. The current best result \cite{DP,LIE} is that \eqref{conjlac} holds with   $$\Phi(t)=t\log\log(\e^\e+t)\log\log\log\log(\e^{\cdots^\e}+t).$$
However, we remark that  the argument for the main theorem in \cite{LIE} can be suitably reformulated to  prove  the stronger \eqref{mainlac}  in place of the main result therein (which is an estimate of the same type as \eqref{mixed}, with a $\log\log$ in place of the $\log$). Therefore, the weaker form of Konyagin's $L\log\log L$ conjecture given by \eqref{mainlac} holds true. Finally, we mention that the Walsh analogue of \eqref{mainlac} is explicitly proved in \cite{DP}.
\end{remark}
We give the proof of Theorem \ref{walshfull} in the upcoming Section \ref{secproof}.
For the convenience of the reader, we provide an  appendix, claiming no originality, containing a step-by-step account of the changes needed in  the argument for the main theorem of  \cite{LIE}     to obtain the weak-$L^p$ bound \eqref{mainlac} for the lacunary Carleson operator.
\subsection*{Acknowledgements}
The author is deeply grateful to Andrei Lerner for his contribution and for providing additional motivation for this article. 

\section{Proof of Theorem \ref{walshfull}} \label{secproof}
%
   As usual, we will prove \eqref{wfweak} by relying on the (Walsh) phase plane model sums (see for instance  \cite{Th1,ThWp}). We remark that the main technical tool which is not present in the classical works we   mentioned   is a discrete variant of the  multi-frequency Calder\'on-Zygmund decomposition of \cite{NOT} (Lemma \ref{CZ} below). Similar arguments  have already found ample use in the treatment of discrete modulation-invariant singular integrals \cite{OT,DL,DD2,DP}.   
  
  Let $\D$ be the standard dyadic grid on $\R_+$; below,  we indicate with $\S$ an (arbitrary)  finite collection of \emph{bitiles}, that is rectangles $s=I_s \times \omega_{s}\subset \D \times \D$ with $|\omega_s|=2|I_s|^{-1}$. Denoting by $\omega_{s_1},\omega_{s_2}$, respectively,  the left and right dyadic child  of $\omega_{s}$,  each bitile $s$  is thought of   as the union of the two \emph{tiles} (dyadic rectangles of area 1)   
 $
 s_1 = I_s \times \omega_{s_1},  s_2 = I_s \times \omega_{s_2}$. Writing  $W_{n}$ for the $n$-th Walsh character on $\mathbb T$, the   Walsh wave packet time-frequency adapted to a tile $t=I_t \times \omega_t$ is then defined as 
$$
w_t(x) = \mathrm{Dil}^{2}_{|I_t|} \mathrm{Tr}_{\inf I_t} W_{n_t} (x)=|I_t|^{-1/2}W_{n_t}\Big(\frac{x-\inf I_t}{|I_t|}\Big), \qquad n_t:= |I_t| \inf \omega_t.
$$
The model sums for the Walsh-Carleson maximal operator $\Ws$ are then given by
$$
\Wm_{ \S } f (x) =    \sum_{s \in \S }\eps_s\l f, w_{s_1}\r w_{s_1}(x) \cic{1}_{\omega_{s_2}} (N(x)),
$$
where 
$N:\R^+\to \R^+$ is an (arbitrary) measurable choice function, and $\{\eps_{s}\}\in  \{-1,0,1\}^\S$.   By the reduction  given in e.g.\ \cite{Th1,ThWp}, Theorem \ref{walshfull} is a consequence of    the bound ($p'$ is the H\"older dual of $p$)
\begin{equation}\label{mainprop} 
\|\Wm_\S f\|_{p,\infty} \lesssim  p' \|f\|_p , \qquad \forall\, 1<p\leq 2;
 \end{equation}  
in \eqref{mainprop} and in what follows, the constants implied by the almost inequality signs are meant to be absolute (in particular, independent on $\S$, $N$  and $\{\eps_{s}\}$) and  may vary at each occurrence. 
Observe that   \eqref{mainprop} is recovered by taking $G=\{|\Wm_\S f|>\lambda\}, g(x)= \cic{1}_{G'}(x) \exp(-i \arg(\Wm_\S f(x)))$ in the bound 
\begin{equation}\label{mainprop2} 
|\l \Wm_\S f, g  \r|  
  \lesssim p' \|f\|_p  |G|^{\frac{1}{p'}}, \qquad \forall |g| \leq \cic{1}_{G'},
\end{equation}
where $G' \subset G$ is a suitably chosen (possibly depending on $f$) major subset of $G$: that is, $|G|\leq 4|G'|$.
 By (dyadic) scale-invariance of the family of operators $\{\Wm_\S\}$ over all choices of $\S \subset \D \times \D$ and measurable functions $ N$,  and by linearity in $f$, it suffices to prove \eqref{mainprop2} in the case   $\|f\|_p=1, 1 \leq |G| <4$, to which we  turn in Subsection \ref{ssproof}. In the upcoming Subsection \ref{stuff}, we recall some tools of discrete  time-frequency analysis.
 \subsection{Trees, size and density} \label{stuff}
 We  will use the well-known Fefferman order relation on either tiles or bitiles: $
s \ll s'$ if $I_s \subset I_{s'}$ and $ \omega_{s} \supset \omega_{s'}$
We say that $\S$ is a \emph{convex} collection of bitiles if $s,s'\in \S, s\ll s''\ll s'$ implies $s''\in\S$. It is no restriction to prove \eqref{mainprop} under the further assumption that $\S$ is convex, and we do so. A convex collection of bitiles  $\T\subset \S$  is called \emph{tree} with top bitile $s_\T  $ if $s\ll s_\T$ for all $s \in \T$. 
 We simplify notation and write $I_\T:=I_{s_\T}, \omega_{\T}=\omega_{s_\T}$. We will call \emph{forest}  a collection of (convex) trees  $\T \in \F$, and will make use of the quantity 
$$
\tops(\F) := \sum_{\T \in \F} |I_\T|.
$$

Given a measurable function $N: \R \to \R$  and $G\subset \R$, define
$$
\dense_{G} (\S) = \sup_{s \in  
\S } \sup_{s' \in \S: s \ll s'} \frac{|G\cap I_{s' }\cap N^{-1}(\omega_{s'_2}) |}{|I_{s'}|}.
$$
Furthermore, for  $f \in L^2(\mathbb{T})$, we set
$$
\size_{f} (\S) = \sup_{s \in \mathbf{S}}\max_{j=1,2}   \frac{|\l f,w_{s_j} \r|}{|I_s| ^\frac12}.
 $$
 We observe that $\size,\dense$ are monotone increasing with respect to set inclusion. One has $\dense_G (\S) \leq 1$ for each $G \subset \R$, and it is immediate to see that 
 \begin{equation}
\label{ubsize}
\size_f(\S) \leq \sup_{s\in \S} \inf_{x \in I_s} \M_1 f(x).
\end{equation}
  where $\M_p $, $1\leq p<\infty$,  denotes the (dyadic) $p$-th Hardy-Littlewood maximal function.
Finally, we recall \emph{verbatim} a result from \cite{DL} (Lemma 2.13 therein).
\begin{lemma} \label{tree-est} Let $h \in L^2(\R)$ and $\F$ be a forest with $ \dense_G(\F) \leq \delta,  \,   \tops(\F_\delta) \lesssim \delta^{-1} |G|.$
Then for all $g: \R \to \mathbb C, |g| \leq \cic{1}_G$,
$$ 
|\l \Wm_\F h, g  \r| \lesssim \min\left\{\size_{h}(\F) |G|, \delta^{
\frac12} \sqrt{|G|}{\|h\|_2} \right\}.
$$
\end{lemma}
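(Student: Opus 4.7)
The plan is to establish two separate estimates corresponding to the two branches of the minimum. For each tree $\T \in \F$ with top $s_\T$, the model sum $\Wm_\T h$ is supported in $R_\T := I_\T \cap N^{-1}(\omega_\T)$, whose $G$-content is controlled by the density hypothesis applied at the top bitile: $|G\cap R_\T|\leq \delta|I_\T|$. This localization is the single common input to both branches.

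For the size branch I would first establish the pointwise single-tree bound $\|\Wm_\T h\|_\infty \lesssim \size_h(\T)$ via the classical Walsh tree argument: decompose $\T$ into a $1$-tree part (bitiles $s$ with $\omega_\T \subset \omega_{s_1}$) and a $2$-tree part. On a $1$-tree, all wave packets $w_{s_1}$ share the frequency $n_{s_\T}$, and the partial sums collapse to a Walsh martingale pointwise bounded by $\size_h(\T)$; the $2$-tree decomposes into a disjoint union of overlapping subtrees, treated identically. Pairing with $g\cic{1}_{G\cap R_\T}$, summing over $\T$, and invoking $\tops(\F)\lesssim \delta^{-1}|G|$:
\[
|\l\Wm_\F h,g\r|\leq \sum_\T |\l\Wm_\T h,g\r|\lesssim \size_h(\F)\cdot \delta\cdot \tops(\F)\lesssim \size_h(\F)|G|.
\]

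For the Bessel branch, rather than summing tree-by-tree, I would bound $\|\Wm_\F h\|_{L^2(G)}$ globally through a $TT^*$ expansion. The diagonal $s=s'$ contributes
\[
\sum_s|\l h,w_{s_1}\r|^2|I_s|^{-1}|G\cap I_s\cap N^{-1}(\omega_{s_2})|\leq \delta\sum_s|\l h,w_{s_1}\r|^2\lesssim \delta\|h\|_2^2,
\]
using the density hypothesis tile-by-tile and a forest-wide Bessel inequality. Off-diagonal terms vanish whenever $\omega_{s_2}\cap\omega_{s'_2}=\emptyset$, so only pairs nested in frequency survive; these are grouped into intra-tree interactions (handled by the single-tree Bessel $\sum_{s\in\T}|\l h,w_{s_1}\r|^2\lesssim \|P_\T h\|_2^2$, where $P_\T h:=\sum_{s\in\T}\l h, w_{s_1}\r w_{s_1}$) and inter-tree ones (controlled by forest orthogonality $\sum_\T\|P_\T h\|_2^2\lesssim \|h\|_2^2$, from essential disjointness of bitile collections across distinct trees in a forest). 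Cauchy--Schwarz against $g\leq \cic{1}_G$ then yields $|\l\Wm_\F h,g\r|\leq \|\Wm_\F h\|_{L^2(G)}\|g\|_2\lesssim \delta^{1/2}\|h\|_2\sqrt{|G|}$.

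The hard part will be the off-diagonal analysis of the $TT^*$ expansion: when $\omega_{s_2}\cap\omega_{s'_2}\neq\emptyset$ (necessarily nested), the wave packets $w_{s_1}, w_{s'_1}$ are not orthogonal, and propagating the single density factor $\delta$ cleanly through the resulting double sum requires a careful tree-by-tree reduction exploiting the convexity of each tree and the forest structure. All other steps are either classical (the Walsh martingale identity) or routine Cauchy--Schwarz manipulations.
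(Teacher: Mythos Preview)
The paper does not prove this lemma: it is recalled \emph{verbatim} from \cite{DL} (Lemma~2.13 there), so there is no in-paper argument to compare against. What one finds in \cite{DL} and its predecessors is, for the $\delta^{1/2}\sqrt{|G|}\|h\|_2$ branch, a further \emph{size} decomposition of the forest $\F$ into subforests $\F^\sigma$ with $\size_h(\F^\sigma)\lesssim\sigma$ and $\tops(\F^\sigma)\lesssim\sigma^{-2}\|h\|_2^2$, followed by the single-tree estimate $|\l \Wm_\T h,g\r|\lesssim \size_h(\T)\,\dense_G(\T)\,|I_\T|$ summed over trees, optimized in $\sigma$. Your size branch is essentially this single-tree estimate summed directly, and is correct in spirit (though your claim that $\Wm_\T h$ is supported in $I_\T\cap N^{-1}(\omega_\T)$ is not literally right for the $1$-tree part; one uses the density bound tile by tile there).

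Your route to the second branch is genuinely different, and there is a gap. In the diagonal of your $TT^*$ expansion you invoke a ``forest-wide Bessel inequality'' $\sum_{s\in\F}|\l h,w_{s_1}\r|^2\lesssim\|h\|_2^2$. This fails for a general forest: nothing in the hypotheses $\dense_G(\F)\leq\delta$, $\tops(\F)\lesssim\delta^{-1}|G|$ prevents the lower tiles $\{s_1:s\in\F\}$ from being heavily nested, and for a chain of nested Walsh tiles the Bessel sum can diverge (at each scale below a fixed tile there are $2^k$ descendants, each contributing $2^{-k}$). The same obstruction reappears in your off-diagonal grouping: the ``forest orthogonality'' $\sum_\T\|P_\T h\|_2^2\lesssim\|h\|_2^2$ is a consequence of a \emph{size} selection (tree tops pairwise incomparable in a suitable sense), not of the density hypotheses you have available. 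The fix is precisely the size decomposition alluded to above: it is what converts the $\|h\|_2$ information into usable $\tops$ control, and bypasses any global Bessel claim for the raw forest.
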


  \subsection{Proof of \eqref{mainprop2}}\label{ssproof} Recall that we are assuming  $\|f\|_p=1, 1 \leq |G| <4$.
For an appropriate (absolute) choice of   $c>0$,
\begin{equation} \textstyle \label{eset3}\big|E  := \{\M_{p } f \geq c\}  \big|\lesssim c^{-p }\|\M_{p } f \|_{p }^{p }\leq \frac14 .
\end{equation}
Set $G':= G \backslash  E  $; by the above, $|G'| \geq \frac12$, so that $G'$ is a major subset of $G$.
Since $ w_{s_1}(x) \cic{1}_{\omega_{s_2}} (N(x))$ is supported inside $I_s$, we have that $\l w_{s_1},g \r=0$ when  $|g|\leq \mathbf{1}_{G'}$ and $I_s \cap G'=\emptyset$. This means that
\begin{equation} \label{goodtiles}
\l \Wm_\S f, g\r  = \l \Wm_{\S_\mathsf{good}} f, g\r  , \qquad \S_\mathsf{good} := \{s \in \S : I_s \cap E ^c \neq \emptyset\}.
\end{equation}
Therefore, from now on, we will just replace  $\S$ by $\S_\mathsf{good}$ in \eqref{mainprop2}. Note that, as a consequence of \eqref{ubsize} and of the definition of $\S_\mathsf{good},$ we have $\size_f(\S_\mathsf{good})\lesssim 1$.

The next step is to    apply the density decomposition lemma (for instance, \cite[Lemma 2.6]{DL})  to $\S_\mathsf{good}$, writing 
\begin{equation} \label{split}
\S_\mathsf{good} = \bigcup_{\delta \in 2^{-\mathbb N}} \F_{\delta}, \qquad \size_f(\F_{\delta}) \lesssim 1,  \quad \dense_{G} (\F_\delta) \leq \delta, \quad \tops(\F_\delta) \lesssim \delta^{-1} |G|.
\end{equation}
We claim the single forest estimate
\begin{equation}
\label{sfed}
|\l \Wm_{\F_\delta} f, g \r |  \lesssim  \delta^{\frac{1}{p'}}. 
\end{equation}
Assuming \eqref{sfed} holds true,
$$
|\l \Wm_{\S_\mathsf{good}} f, g \r |\leq \sum_{\delta \in 2^{-\N}}|  \l \Wm_{\F_\delta} f, g \r |  \lesssim  \sum_{\delta \in 2^{-\N}} \delta^{\frac{1}{p'}} \lesssim p',
$$
that is, we have proved \eqref{mainprop2}.  The remainder of the section is then devoted to the proof of  the single forest estimate \eqref{sfed}. The key tool is provided by the Lemma below.
\begin{lemma} \label{CZ} For each $\delta \in 2^{-\mathbb N}$, there is a function $h_\delta $ such that $$
\|h_\delta\|_2 \lesssim  \delta^{-\frac12+ \frac{1}{p'} }, \qquad  \l f, w_{s_1} \r = \l  h_\delta, w_{s_1}\r \quad \forall s \in \F_\delta.
$$
\end{lemma}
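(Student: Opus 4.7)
\textbf{Plan for Lemma \ref{CZ}.}
The plan is to prove the lemma by a discrete multi-frequency Calder\'on-Zygmund decomposition, in the spirit of \cite{NOT,DL,DP}; in the present setting the role of the ``number of frequencies'' is played by the top count $\tops(\F_\delta)\lesssim\delta^{-1}$ supplied by \eqref{split}. The function $h_\delta$ will agree with $f$ outside a bad set $\Omega$ and equal the $L^2$-orthogonal projection of $f$ onto the span of the relevant wave packets inside $\Omega$; the reproduction identity $\l h_\delta,w_{s_1}\r=\l f,w_{s_1}\r$ for $s\in\F_\delta$ will then come for free.

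Concretely, set
$$\Omega:=E\cup\bigcup_{\T\in\F_\delta}I_\T,$$
with $E$ as in \eqref{eset3}, so that $|\Omega|\lesssim |E|+\tops(\F_\delta)\lesssim\delta^{-1}$. Since $\Omega$ is a union of dyadic intervals, I let $\mathcal J$ denote its partition into maximal dyadic subintervals, and for each $J\in\mathcal J$ I put $V_J:=\mathrm{span}\{w_{s_1}\st s\in\F_\delta,\,I_s\subseteq J\}\subseteq L^2(J)$, with orthogonal projection $P_J$. Setting
$$h_\delta:=f\mathbf 1_{\Omega^c}+\sum_{J\in\mathcal J}P_J(f\mathbf 1_J),$$
the reproduction identity is immediate: for each $s\in\F_\delta$ the tile $I_s\subseteq I_\T\subseteq\Omega$ is contained in the unique $J\in\mathcal J$ that covers $I_\T$, and since $w_{s_1}\in V_J$ is supported in $J$, self-adjointness of $P_J$ yields $\l h_\delta,w_{s_1}\r=\l P_J(f\mathbf 1_J),w_{s_1}\r=\l f\mathbf 1_J,w_{s_1}\r=\l f,w_{s_1}\r$.

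The outside piece is easy: on $\Omega^c\subseteq E^c$ one has $|f|\leq \M_p f\leq c$ a.e.\ by Lebesgue differentiation, so $\|f\mathbf 1_{\Omega^c}\|_2^2\leq c^{2-p}\|f\|_p^p\lesssim 1\lesssim\delta^{-1+2/p'}$. The interior piece reduces, by disjointness of the $J\in\mathcal J$, to
$$\sum_{J\in\mathcal J}\|P_J(f\mathbf 1_J)\|_2^2\leq \sum_{J\in\mathcal J}\|P_J\|_{L^p(J)\to L^2(J)}^{2}\|f\mathbf 1_J\|_p^2\lesssim\delta^{-1+2/p'}.$$
This last estimate is the step I expect to be the main obstacle, and corresponds to a Walsh ``multi-frequency Bernstein'' bound: a naive dimension count over the tiles with $I_s\subseteq J$ is too loose, and one must use the tree structure. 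The right bound is obtained by splitting each tree into a $1$-tree and a $2$-tree and using the fact that the Walsh wave packets of such a tree factor as the product of a fixed top modulation character adapted to $\omega_\T$ and a Haar-type function adapted to the spatial scales of the tree; consequently $V_J$ becomes, tree by tree, a modulation of a Haar subspace, whose $L^p(J)\to L^2(J)$ projection norm is controlled by the number of top intervals $I_\T$ contributing to $J$. A H\"older-type rearrangement and summation in $J$ using $\sum_\T|I_\T|\lesssim\delta^{-1}$ and $\|f\|_p=1$ then yields the claimed bound $\tops(\F_\delta)^{2/p-1}\lesssim\delta^{-1+2/p'}$, completing the proof.
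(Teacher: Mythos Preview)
Your reproduction identity is fine, but the $L^2$ bound does not go through, and the failure is structural rather than a matter of filling in details. By placing $\bigcup_{\T\in\F_\delta} I_\T$ inside $\Omega$, every maximal $J\in\mathcal J$ swallows at least one full tree top, so $V_J$ is a \emph{multi-scale} Walsh subspace. Your closing claim, that the $L^p(J)\to L^2(J)$ norm of $P_J$ is governed by the number of tree tops landing in $J$, is false already for a single tree. Take $\F_\delta$ to consist of one $2$-tree with top interval $J=[0,1)$: the wave packets $\{w_{s_1}:s\in\T\}$ are an orthonormal family whose span, as the tree deepens, exhausts a codimension-one subspace of $L^2(J)$. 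Then $P_J$ is essentially the identity and $\|P_J(f\cic 1_J)\|_2\sim\|f\|_2$, which is not controlled by $\|f\|_p$ for $p<2$. No amount of splitting into $1$- and $2$-trees helps here, because ``modulated Haar subspace'' still means arbitrarily many scales, and Haar projections are $L^p\to L^p$ bounded, not $L^p\to L^2$ bounded. (Concretely: with a single tree, $\tops(\F_\delta)\lesssim 1$, so you are asserting $\|P_J(f\cic 1_J)\|_2\lesssim 1$, which is simply $\|f\|_2\lesssim\|f\|_p$.)

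The paper's argument avoids this by taking the bad set to be \emph{only} $E=\{\M_p f\geq c\}$ and working with its maximal dyadic intervals $I\in\cic I$. The point is that the defining property of $\S_{\mathsf{good}}$, namely $I_s\cap E^c\neq\emptyset$, forces $I\subsetneq I_s$ whenever $I_s$ meets $I$; hence on each $I$ the restriction $w_{s_1}\cic 1_I$ is a scalar multiple of a \emph{single-scale} wave packet $w_t$ with $I_t=I$. The relevant subspace $H_I\subset L^2(I)$ is therefore spanned by at most $N_I$ orthonormal wave packets at the fixed scale $|I|$, where $N_I$ is the number of trees $\T\in\F_\delta$ with $I\subset I_\T$, and one has the genuine Bernstein-type bound $\|v\|_{L^{p'}(I)}\lesssim N_I^{1/2-1/p'}\|v\|_{L^2(I)}$ for $v\in H_I$. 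Combined with $\|f\|_{L^p(I)}\lesssim 1$ (maximality of $I$) this gives $\|h_I\|_{L^2(I)}\lesssim N_I^{1/2-1/p'}$, and a H\"older rearrangement using $\sum_I|I|N_I\leq\tops(\F_\delta)\lesssim\delta^{-1}$ and $\sum_I|I|\leq|E|\lesssim 1$ finishes. The reproduction identity is recovered not via $I_s\subset J$ (which now fails) but via $w_{s_1}\cic 1_I\in H_I$. In short, you enlarged $\Omega$ to make the reproduction identity trivial, and in doing so destroyed the single-scale structure that makes the $L^2$ estimate possible; drop $\bigcup_\T I_\T$ from $\Omega$ and redo both halves accordingly.
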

In particular, we see from  Lemma \ref{CZ} that $\l \Wm_{\F_\delta}  f, g\r= \l \Wm_{\F_\delta}  h_\delta, g\r$ and that $\size_{ h_\delta}(\F_\delta)=\size_{ f}(\F_\delta)\lesssim1$; therefore, we may use   Lemma \ref{tree-est} to bound
$$
|\l \Wm_{\F_\delta}  f, g \r| =| \l \Wm_{\F_\delta} h_{\delta}  , g \r|\lesssim \delta^{\frac12}|G|^{\frac12}\|h_\delta\|_2 \lesssim \delta^{\frac{1}{p'}},
$$
which is \eqref{sfed}. We have thus completed the proof of Theorem \ref{walshfull}, up to showing Lemma \ref{CZ} holds true.
\subsection{Proof of Lemma \ref{CZ}} This argument is analogous to \cite[Lemma 5.1]{DD2}. We argue under the additional assumption that $f$ is supported on $E=\{\M_{p} f \geq c\}$; the general case requires only trivial modifications.
Let $I \in \cic{I}$  be the  maximal dyadic intervals of $E$;  for each $I\in \cic{I}$, let
$ t \in
T_I$ be the collection of all tiles  having $I_t=I$ and which are comparable under $\ll$ to some tile $s_1 \in \F_\delta $. The tiles of $T_I$ are obviously pairwise disjoint. 

The definition of $\S_{\mathsf{good}}$ ensures that, whenever $I_s \cap I\neq \emptyset$ for some $s \in \S_{\mathsf{good}}$ and $I \in \cic{I}$, the inclusion $I \subsetneq I_s$ must hold. It follows that  if $t \in T_I, s_1 \in \{s_1: s \in \T \in \F_\delta\}$ are related under $\ll$, then $t \ll s_1$.
 By standard properties of the Walsh wave packets, 
  $w_{s_1}$   is  a scalar multiple of $w_t$ on $I$;  in particular, $
w_{s_1} \cic{1}_I $ belongs to $H_I$, the subspace of $L^2(I)$ spanned by $\{w_t: t \in T_I\}$. 
A further consequence is that, if $N_I$ is the number of trees $\T \in \F_\delta$ with $I \subset I_\T$, we have $\#T_I \leq N_I$. For $v \in H_I$, we have the inequality
\begin{equation*}
 \|v\|_{L^{p'}(I)}\lesssim N_I^{\frac12-\frac{1}{p'}} \|v\|_{L^2(I)}.
  \end{equation*}  
Since $\|f \|_{L^{p }(I)} \lesssim 1$ by maximality of $I$ in $E$, it then follows that
$$
|(f  ,v)_{L^2( I)}|   \leq   \|f \|_{L^{p }(I)} \|v\|_{L^{p'}(I)} \lesssim N_I^{\frac12-\frac{1}{p'}} \|v\|_{L^2(I)}  \qquad \forall v \in H_I,
$$
and consequently $h_I $, the projection of $f \cic{1}_I$ on $H_I$, satisfies  $\|h_I\|_{L^2(I)}  \lesssim N_I^{\frac12-\frac{1}{p'}}$. Setting
$
h_\delta := \sum_{I \in \cic{I}} h_I,
$
we see that 
$$
\|h_\delta\|_{2}^2 = \sum_{I \in \cic{I}}|I| \|h_I\|_{L^2(I)}^2 \lesssim  \sum_{I \in \cic{I}}|I| N_I^{1-\frac{2}{p'}} \lesssim \Big\|\sum_{\T \in \F_\delta} \cic{1}_{I_\T}\Big\|_{1}^{1-\frac{2}{p'}}\Big( \sum_{I \in \cic{I}} |I|\Big)^{\frac{2}{p'} } \lesssim \delta^{-1+\frac{2}{p'}};$$
in the last step, we made use of the bound on $\tops$ from \eqref{split}, and of \eqref{eset3} to estimate the sum over $I$.
Finally, in view of the above discussion, if $s \in \T\in \F_\delta $   
$$
\l f  , w_{s_1} \r = \sum_{I \in \cic{I}}\l f\cic{1}_I   , w_{s_1}  \r = \sum_{I \in \cic{I}}  \l f  \cic{1}_I  , c w_{t(s_1; I)} \r =   \sum_{I \in \cic{I}}  \l h_I  ,  w_{s_1} \r = \l h_\delta,w_{s_1} \r
$$ 
where $t(s_1; I)$ is the unique (if any) element $t$ of $T_I$ with $t \ll s_1$. This  finishes the proof of the lemma.
\setcounter{section}{0}
\setcounter{proposition}{0}
\renewcommand{\theproposition}{\thesection.\arabic{proposition}}
\setcounter{equation}{0}
\renewcommand{\theequation}{\thesection.\arabic{equation}}
\setcounter{figure}{0}
\setcounter{table}{0}

\appendix

\section{Proof of the weak-$L^p$ bound \eqref{mainlac}}
This appendix is a re-elaboration of the proof of the main theorem of \cite{LIE}, whose content is that the maximal operator $\mathrm C_{\{n_j\}}$ associated to the $\theta$-lacunary sequence $ \{n_j\}$ satisfies 
$$
\|\mathrm C_{\{n_j\}}\|_{1,\infty} \lesssim \|f\|_1 \log\log \Big(\e^\e+ \frac{\|f\|_\infty}{\|f\|_1}\Big).
$$
Our aim is to prove the stronger bound \eqref{mainlac}, that is
$$
\|\mathrm{C}_{\{n_j\}} f\|_{ p,\infty } \leq c \log(\e+(p-1)^{-1})\|f\|_{p},\qquad \forall\,1<p\leq 2;$$
the  $p=2$ case can be obtained by standard Littlewood-Paley theory techniques, so that, by interpolation, it suffices to argue for $1<p\leq4/3$ (say).

 We claim no originality, essentially following step by step the proof in \cite{LIE}, the only difference being that our $(f,\lambda)$ decomposition (in the terminology of \cite[Subsection 3.2]{LIE}) is based on $\M_p$ rather than on $\M_1$, reflecting the assumption $f \in L^p(\mathbb T)$. This allows for the use of (the dual of) Zygmund's inequality in the form\footnote{Here $p'=p/(p-1) \in 2,\infty)$ is the H\"older dual of $p$. }
\begin{equation} \label{ZYG}
\Big(\sum_{k \geq 1} \Big| \int_{\mathbb T}Êf(x)\e^{-i \xi_k x }\, \d x \Big|^2 \Big)^{\frac12} \lesssim p' \|f\|_{L^p(\mathbb T)}
\end{equation}
for each $\theta$-lacunary sequence $\xi_k$ (not necessarily of integers) with $\xi_1\geq 4\theta^{-1}$, and  $1<p\leq 2$, with implicit constant independent on all but $\theta$. The statement given in \eqref{ZYG} above can be found  e.g.\ in \cite{KSZ}: we note that, although the proof in \cite{KSZ} can be modified so that $\sqrt{p'}$ appears in place of $p'$, this is not allowing for any essential improvement in the result we are aiming for.
\subsection{Discretization} Let $\D$ be the standard dyadic grid on $\R$ and $\D_\Tor$ be its restriction to $\Tor$; we indicate by $\S$ the collection of \emph{tiles} $s=I_{s} \times \omega_s \subset \D_\Tor \times \D$ with $|I_s|=|\omega_s|^{-1}$. For  a given   measurable $N$ function on $\Tor$ with range contained in $\{n_j\}$,  and $s \in \S$, set $E(s):=\{x \in I_s: N(x) \in \omega_s\}$. Further,  let  $\psi$ be a smooth function supported on $[2,8]$ and such that 
$$
\sum_{k\geq 0} 2^{k}\psi(2^k x) = \frac{1}{x}, \qquad \forall \,x \in \mathbb{T}\backslash\{0\},
$$
and define
$$
T_s f(x) =\Big( \int_{\Tor} \e^{-i N(x) t} |I_s|^{-1}\psi (|I_s|^{-1}(x-t)) f(t) \,\d t\Big)\cic{1}_{E(s)} (x).
$$
Then, for a suitable choice of $N$ as above, 
$$
|\mathrm C_{\{n_j\}} f(x) | \leq 2 \Big| \sum_{s \in \S} T_s f(x) \Big|, \qquad x \in \mathbb T;
$$
to prove \eqref{mainlac}, it will thus suffice to show that for all measurable $N$ with lacunary range $\{n_j\}$, each $1<p\leq \frac43$, $f \in L^p(\Tor)$ of unit norm and all $G\subset \Tor$ there exists a subset $G'\subset G$ with $|G|\leq 4 |G'|$ such that
\begin{equation}\label{mainlie}
\Big| \Big\l \sum_{s \in \S} T_s f, g \Big\r \Big| \lesssim \log(\e+p')|G|^{\frac{1}{p'}} \qquad \forall |g| \leq \cic{1}_{G'}
\end{equation}
with implicit constant independent of all but (possibly) the lacunarity constant $\theta$. 

Observe that $T_s$ is supported on $E(s) \subset I_s$, and $T^*_s$ is supported on $17I_s\backslash 3I_s$; by further cutting (smoothly) $\psi$ into 32 pieces, we can assume that $T^*_s$ is supported on the interval $I_s^*=I_s+j|I_s|/4$ for some fixed integer $j \in (-40,-8] \cup [8,40)$. Furthermore, there is no loss in generality by working with $j=8$, and we do so, so that $I_s^*=I_s+2|I_s|$ from now on.
\subsection{Proof of \eqref{mainlie}: main reductions}
Let now $f \in L^p(\Tor)$ of unit norm  and $G\subset \Tor$ be given. The first step in the proof of \eqref{mainlie} is the definition of the major set $G'$ as
\begin{equation}
\label{gprime}
G':= G \backslash 1000\{\M_p f(x) \geq c|G|^{-\frac1p}\};
\end{equation}
one obtains that $|G|\leq 4|G'|$ by using the weak-$L^p$ boundedness of $\M_p$ and suitably choosing an absolute constant $c>0$.

The next task is to decompose the collection of tiles $\S$ (roughly) according to  the local $L^p$-norm of $f$ on $I_s$. Define for each $k\geq 0$,
$$
\widetilde{\I_k}:=\{\M_p f(x) \geq c2^{-k}|G|^{-\frac1p}\}=\bigcup_{I \in {\I_k} } I
$$ 
 $I \in \I_k$ being the maximal dyadic intervals of $\widetilde{\I_k}$; we note that, by the maximal theorem and by maximality of $I$
\begin{equation}
\label{equationIk}
|\widetilde{\I_k}|\lesssim 2^{kp} |G|, \qquad \|f\|_{L^p(I)}\lesssim 2^{-k}|G|^{-\frac1p};
\end{equation} 
 the notation ${L^p(I)}$ stands for $L^p(I; \d x/|I|)$. We partition the tiles of $\S$ by making use of the subcollections
$$
\S_{k,o} :=\big\{s_o=I_{s_o} \times \omega_{s_o}: I_{s_o} \in \I_k, 0 \in 2 \omega_{s_o}\big\}, \qquad k  \geq 0
$$
as follows:
\begin{align*}
&\S=  \S_\textrm{clust} \cup \Big( \bigcup_{k \geq 0} \S(k)\Big) \cup \Big( \bigcup_{k \geq 0} \bigcup_{s_o \in \S_{k,o}} \S_{k,1}(s_o) \Big) \cup \Big( \bigcup_{k \geq 0} \bigcup_{s_o \in \S_{k,o}} \S_{k,2}(s_o) \Big),\\
& \S_\textrm{clust} :=\{s: 10\theta \omega_s \ni 0\};\\
					& \S(0):= \{s\not\in \S_\textrm{clust} : I_s^*\subset \widetilde{ \I_{0}}\}, \;\S(k):= \{s\not\in \S_\textrm{clust}: I_s^*\subset  \widetilde{\I_{k}}, I_s^* \cap  \widetilde{\I_{k-1}}=\emptyset \}, \, k=1,2,\ldots,\\
& \S_{k,1}(s_o):=\{s\not\in \S_\textrm{clust}:I_s^*\supset I_{s_o},2\omega_{s}\cap 2\omega_{s_o}\neq \emptyset, \; \textrm{either } I_s^* \cap \widetilde{\I_{k+1}} =\emptyset \textrm{ or } I_s^* \subset \widetilde{\I_{k+1}}\},\\
&\S_{k,2}(s_o):=\{s\not\in \S_\textrm{clust}:I_s^* \supset I_{s_o},2\omega_{s}\cap 2\omega_{s_o}=\emptyset, \; \textrm{either } I_s^* \cap \widetilde{\I_{k+1}} =\emptyset \textrm{ or } I_s^* \subset \widetilde{\I_{k+1}}\}.
\end{align*}
With the above decomposition in hand, \eqref{mainlie} will follow by combining the bounds of the following proposition.  Note that the choice $p\leq \frac43$ guarantees that the summation index exponent $(\frac p2-1)$ appearing in \eqref{badk} below (as well as in the sequel) is uniformly  bounded away from zero.
\begin{proposition}
Let $g$ be a subindicator function supported on $G'$ defined above. We have the estimates
\begin{align}
& \Big| \Big\l \sum_{s \in \S_{\mathrm{clust}}} T_s f, g \Big\r \Big| \lesssim   |G|^{\frac{1}{p'}}, \label{clust}
\\ & \Big\l \sum_{s \in \S(0)} T_s f, g \Big\r =0, \label{bad}
\\ & \Big| \Big\l \sum_{s \in \S(k)} T_s f, g \Big\r \Big| \lesssim  2^{(\frac p2-1)k}|G|^{\frac{1}{p'}}, \qquad k\geq 1,\label{badk}
\\ & \Big| \Big\l\sum_{k\geq 0} \sum_{s_{o}\in \S_{k,o}} \sum_{s \in \S_{k,1}(s_o)} T_s f, g \Big\r \Big| \lesssim  |G|^{\frac{1}{p'}},\label{e1}
\\ & \Big| \Big\l \sum_{k\geq 0} \sum_{s_{o}\in \S_{k,o}} \sum_{s \in \S_{k,2}(s_o)}T_s f, g \Big\r \Big| \lesssim  \log(\e+p')|G|^{\frac{1}{p'}}.\label{e2}
\end{align}
\end{proposition}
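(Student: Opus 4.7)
The plan is to prove each of the five inequalities separately, with the logarithmic loss appearing only in \eqref{e2} and the constants in the other four independent of $p\in(1,4/3]$. The common backbone is the multi-scale stopping decomposition into levels $\widetilde{\I_k}$ of $\M_p f$ used in the excerpt (replacing the $\M_1$-based Calder\'on--Zygmund decomposition of \cite{LIE}), together with Zygmund's inequality \eqref{ZYG} as the principal orthogonality tool whenever multi-frequency phenomena appear.

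The pieces \eqref{bad} and \eqref{clust} are dispatched by soft arguments. For \eqref{bad}, the inclusion $I_s^*=I_s+2|I_s|\subset\widetilde{\I_0}$ for $s\in\S(0)$ forces $I_s\subset 1000\widetilde{\I_0}$; by \eqref{gprime} the set $G'$ has empty intersection with $1000\widetilde{\I_0}$, so the pairing vanishes term by term since $T_s f$ is supported on $I_s$. For \eqref{clust}, the frequency confinement $10\theta\omega_s\ni 0$ makes the modulation $\e^{-iN(x)t}$ essentially constant on the support of $\psi(|I_s|^{-1}(x-t))$ in $t$, so $\sum_{s\in\S_{\mathrm{clust}}}T_sf$ is pointwise dominated by $O_\theta(1)$ Hardy--Littlewood-type maximal functions of $f$; since $G'\subset\{\M_p f<c|G|^{-1/p}\}$ by \eqref{gprime}, integrating over $G'\supset\mathrm{supp}(g)$ gives the required $|G|^{1/p'}$ without any $p$-dependent loss.

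Next come \eqref{badk} and \eqref{e1}, both based on single-frequency tree estimates. For \eqref{badk}, the tiles of $\S(k)$ have $I_s^*\subset\widetilde{\I_k}$, hence local $L^p$ size of $f$ bounded by $2^{-k}|G|^{-1/p}$ on each maximal interval of $\widetilde{\I_k}$, whose total mass is $\lesssim 2^{kp}|G|$ by \eqref{equationIk}; a standard tree-by-tree $L^2$ energy estimate in the spirit of Lemma \ref{tree-est}, paired via Cauchy--Schwarz with $g$ and summed over tops, yields $2^{(p/2-1)k}|G|^{1/p'}$, geometrically decaying in $k$ uniformly for $1<p\leq 4/3$. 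For \eqref{e1}, the overlap $2\omega_s\cap 2\omega_{s_o}\neq\emptyset$ groups $\{s\in\S_{k,1}(s_o)\}$ into essentially a single tree above $s_o$ with top frequency $\omega_{s_o}$; summing single-tree estimates with telescoping across the nested stopping levels indexed by $k$ produces $|G|^{1/p'}$ directly, without invoking multi-frequency orthogonality.

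The principal estimate is \eqref{e2}, which is solely responsible for the $\log(\e+p')$ loss. Here the frequency separation $2\omega_s\cap 2\omega_{s_o}=\emptyset$ genuinely couples many lacunary frequencies per spatial scale: dualizing the pairing, each spatial scale contributes an $\ell^2$ sum of Fourier-type coefficients of $f$ evaluated at frequencies from the lacunary range of $N$, and \eqref{ZYG} bounds this sum by a factor $p'$ times the local $L^p$ norm of $f$. The key combinatorial step, following \cite{LIE} almost verbatim, is that the geometric decay across dyadic spatial scales (fueled by the size bound $\|f\|_{L^p(I)}\lesssim 2^{-k}|G|^{-1/p}$ on the $k$-th stopping level) compresses this per-scale $p'$ loss into a single $\log(\e+p')$ via a stopping-time argument. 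The main obstacle is exactly this last step: one has to verify that using $\M_p$ rather than $\M_1$ to define the stopping intervals still renders the multi-frequency Zygmund estimate compatible with the cross-scale geometric bookkeeping, so that the linear loss $p'$ from a single application of \eqref{ZYG} indeed collapses to the claimed logarithm rather than propagating untamed.
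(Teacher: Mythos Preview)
Your treatment of \eqref{bad}, \eqref{clust}, and \eqref{e1} is in line with the paper. For \eqref{badk} your sketch is slightly off: the paper does not use the local $L^p$ norm of $f$ but rather the pointwise bound $|f|\lesssim 2^{-k}|G|^{-1/p}$ on $I_s^*$ (which follows from $I_s^*\cap\widetilde{\I_{k-1}}=\emptyset$ via Lebesgue differentiation), paired by Cauchy--Schwarz with the uniform $L^2$ bound for the adjoint Carleson operator $\sum_{s\in\S(k)} T_s^*$ applied to $g$. Your tree-energy route and the appeal to Lemma~\ref{tree-est} (a Walsh lemma) are misplaced here, though the numerical outcome is the same.

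The genuine gap is in \eqref{e2}: you have misidentified the mechanism that produces the logarithm. The geometric decay $2^{(p/2-1)k}$ over the spatial stopping levels $k$ sums to $O(1)$; if each term carried a factor $p'$ from \eqref{ZYG}, the total would be $\approx p'$, not $\log(\e+p')$. In the paper the logarithm arises from a \emph{second} decomposition parameter, the mass level $n$ in $\S=\bigcup_{n}\S^n$ with $|E(s)|\leq 2^{-n}|I_s|$ on $\S^n$. For each fixed $n$ one establishes the two-sided bound $\min\{1,\,2^{-n/2}p'\}\,|G|^{1/p'}$: the bound $1$ is inherited from the argument of \cite{LIE2}, while the bound $2^{-n/2}p'$ uses \eqref{ZYG} at each $s_o$ together with the square-function estimate $\big\|\sum_{s_o}\cic{1}_{I_{s_o}}S_{\S_{k,2}^n(s_o)}(g)\big\|_2\lesssim 2^{-n/2}|G|^{1/2}$, followed by the (now harmless) geometric sum in $k$. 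It is the summation $\sum_{n\in\N}\min\{1,\,2^{-n/2}p'\}$ that yields $\log(\e+p')$. Your outline also omits the preliminary splitting $T^*_{\T_\ell(s_o)}=\mathsf T^*_{\T_\ell(s_o)}+\mathsf R^*_{\T_\ell(s_o)}$ into an averaged piece (to which \eqref{ZYG} applies as a genuine lacunary Fourier-coefficient estimate on $I_{s_o}$) and a remainder bounded separately by $|G|^{1/p'}$; without this splitting the application of \eqref{ZYG} is not justified.
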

\begin{proof}[Proof of \eqref{clust}] This estimate follows from the well-known fact that the operator  $\sum_{s \in \S_{\textrm{clust}}} T_s$, akin to a maximally truncated Hilbert transform, is weak-$L^p$ bounded with operator norm independent of $1<p\leq 2$.
\end{proof}
\begin{proof}[Proof of \eqref{bad}] Note that if $s \in \S(0)$, $I_s$ (the support of $T_s f$) is contained in $30\widetilde{\I_0}$, while $g$ is supported away from $1000 \widetilde{\I_0}$. \end{proof}
\begin{proof}[Proof of \eqref{badk}] Here we use that the support of $\sum_{s \in \S(k)} T_s^* $ is contained in $ \widetilde{\I_k}$, and that    $\|f\|_{L^\infty(I)}\lesssim 2^{-k}|G|^{-\frac1p}$ whenever $I\cap\widetilde{\I_{k-1}}=\emptyset$,  so that
$$
\Big| \Big\l \sum_{s \in \S(k)} T_s f, g \Big\r \Big| \leq 2^{-k}|G|^{-\frac1p}|\widetilde{\I_k}|^{\frac12}\Big\|\sum_{s \in \S(k)} T_s^* g\Big\|_2 \lesssim 2^{(\frac p2-1)k}|G|^{\frac{1}{p'}}, 
$$
using \eqref{equationIk} and the $L^2$-boundedness of $\sum_{s \in \S(k)} T_s^*$, which is essentially an adjoint (discretized) Carleson operator.
\end{proof}
In the next subsection, we give the proof of \eqref{e2}, which  is the only term for which \eqref{ZYG} is needed. The proof of \eqref{e1} requires only minimal modifications from the argument used for \cite[Proposition 3]{LIE}; we omit the details.
\subsection{Proof of \eqref{e2}} We begin with some notation: we write $ A_I(h)$ for the average of $h \in L^1(I)$ on $I$; for a tree $\T$ (see \cite{LIE2} for the definition in this context), we write $(\T)_0$ for the shift of $\T$ to the zero frequency.

We perform a further decomposition of the sum in \eqref{e2}. To begin with, observe that for  fixed $k, s_{o} \in \S_{k,o} $ there exists a $\theta$-lacunary sequence $\{\xi_\ell=\xi_{\ell}(s_0): \ell \geq 1\}$ such that   $\S_{k,2}(s_o)$ can be organized into the union of maximal trees $\T_\ell(s_o)$ each with top frequency $\xi_\ell$, and such that  $\xi_1\geq 4\theta^{-1}|I_{s_o}|^{-1}$ (this point is granted by the requirement $ 2\omega_s \cap 2\omega_{s_o}=\emptyset$ for each $s \in\S_{k,2}(s_o)$). This said, we have that
$$
\sum_{k\geq 0} \sum_{s_{o}\in \S_{k,o}} \sum_{s \in \S_{k,2}(s_o)} T^*_s = \sum_{k\geq 0} \sum_{s_{o}\in \S_{k,o}} \sum_{\ell } T^*_{\T_\ell(s_o)}, \qquad T^*_{\T_\ell(s_o)}:= \sum_{s \in \T_\ell(s_o)} T^*_s,  $$ and   we define
$$
\mathsf{T}_{\T_\ell(s_o)}^* g(x):= \e^{-i \xi_\ell x} A_I \big( T^{*}_{(\T_\ell(s_0))_0} g \big)(x), \qquad \mathsf{R}^*_{\T_\ell(s_o)} g: = T_{\T_\ell(s_o)}^* g - \mathsf{T}_{\T_\ell(s_o)}^* g;
$$
(our $\mathsf{T}$ stands for the notation $T_c$ in \cite{LIE}).
With the above splitting  in hand, the bound \eqref{e2} is obtained by combining the two bounds of the proposition below.
\begin{proposition} \label{bruttoproprio} We have the estimates
\begin{align}
 & \Big| \Big\l f,  \sum_{k\geq 0} \sum_{s_{o}\in \S_{k,o}} \sum_{\ell }  \mathsf{R}^*_{\T_\ell(s_o)} g \Big\r \Big| \lesssim |G|^{\frac{1}{p'}},\label{e21} \\ 
 &  \Big| \Big\l f,  \sum_{k\geq 0} \sum_{s_{o}\in \S_{k,o}} \sum_{\ell}  \mathsf{T}^*_{\T_\ell(s_o)} g \Big\r \Big| \lesssim  \log(\e+p')|G|^{\frac{1}{p'}}.
 \label{e22}
\end{align}
\end{proposition}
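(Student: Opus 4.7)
The proof of Proposition \ref{bruttoproprio} would follow the scheme of \cite{LIE}, with the key modifications being that our $(f,\lambda)$-decomposition rests on $\M_p$ rather than $\M_1$, so on each $I\in\I_k$ one has the upgraded $L^p$-bound \eqref{equationIk}, and that the lacunary square function against $f$ is estimated by the $L^p$-Zygmund inequality \eqref{ZYG} rather than by its $L^1\log L$ counterpart.

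\emph{Plan for \eqref{e21}.} The remainder $\mathsf{R}^*_{\T_\ell(s_o)}g$ is, by design, the part of the tree operator that carries no modulated-constant contribution on $I_{s_o}$. I would therefore aim first for a square-function bound
\[
\Big\|\sum_{\ell}\mathsf{R}^*_{\T_\ell(s_o)}g\Big\|_{L^2(I_{s_o})}^2\lesssim |I_{s_o}|,
\]
following the analogous estimate in \cite{LIE}. This combines a BMO-type single-tree bound for each $\mathsf{R}^*_{\T_\ell(s_o)}g$ (which exploits the zero-mean property after the $\e^{i\xi_\ell x}$-demodulation) with the near-orthogonality supplied by the $\theta$-lacunarity of $\{\xi_\ell\}$. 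Pair with $f$ by Cauchy-Schwarz; the bound $\|f\|_{L^\infty(I_{s_o})}\lesssim 2^{-k}|G|^{-1/p}$ is available because every $I_s^*$, $s\in\S_{k,2}(s_o)$, avoids $\widetilde{\I_{k+1}}$. Summing over the pairwise disjoint $\{I_{s_o}:s_o\in\S_{k,o}\}$ inside $\widetilde{\I_k}$ (with $|\widetilde{\I_k}|\lesssim 2^{kp}|G|$) and then over $k\ge 0$ produces a geometric series bounded by $|G|^{1/p'}$.

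\emph{Plan for \eqref{e22}.} Unwinding the definition of $\mathsf{T}^*_{\T_\ell(s_o)}$ yields the factorization
\[
\l f,\mathsf{T}^*_{\T_\ell(s_o)}g\r = A_{I_{s_o}}\big(T^*_{(\T_\ell(s_o))_0}g\big)\cdot\int_{I_{s_o}}f(x)\e^{-i\xi_\ell x}\,\d x.
\]
Applying Cauchy-Schwarz in $\ell$ and then \eqref{ZYG} (after rescaling $I_{s_o}$ to the unit interval and using that $\xi_1(s_o)\geq 4\theta^{-1}|I_{s_o}|^{-1}$) bounds the second factor by $\lesssim p'\,\|f\|_{L^p(I_{s_o})}\lesssim p'\cdot 2^{-k}|I_{s_o}|^{1/p}|G|^{-1/p}$ via \eqref{equationIk}. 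The first factor is a lacunary square function over outputs of the adjoint Carleson-type operator applied to $|g|\le1$, and is handled by the $L^2$-theory used in the proof of \eqref{badk}, producing an absolute constant after Holder over $s_o\in \S_{k,o}$ together with $\sum_{s_o}\|f\|_{L^p(I_{s_o})}^p=\|f\|_{L^p(\widetilde{\I_k})}^p\le 1$.

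\emph{Main obstacle: extracting $\log(e+p')$ in place of $p'$.} A naive application of the above gives a factor $p'$, which is too big by $p'/\log(e+p')$. The anticipated remedy, mirroring the $\log\log$ gain in \cite{LIE} (enhanced by one logarithm thanks to $L^p$-integrability), is to split the sum over $k$ at a threshold $k_\ast\sim\log(e+p')$. For $k\le k_\ast$ apply the Zygmund-based bound, whose $p'$-loss is compensated by the small number of scales; for $k>k_\ast$ invoke a $p'$-free estimate exploiting the $L^\infty$ information on $f$ off $\widetilde{\I_{k-1}}$, in the spirit of \eqref{badk}, so that the contribution decays geometrically past $k_\ast$. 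Matching the two regimes at $k_\ast$ produces the desired $\log(e+p')|G|^{1/p'}$. Executing this splitting while keeping uniform control over the $(s_o,\ell)$ double sum is the main technical, though essentially quantitative, difficulty.
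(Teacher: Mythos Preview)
Your sketch for \eqref{e21} is in the right spirit (the paper simply refers to \cite[Lemmata 1 and 2]{LIE}), though your claimed bound $\|f\|_{L^\infty(I_{s_o})}\lesssim 2^{-k}|G|^{-1/p}$ is not generally available: $I_{s_o}\in\I_k$ is a maximal interval of $\widetilde{\I_k}$ and may well meet $\widetilde{\I_{k-1}}$, so $f$ can be large there. The actual argument in \cite{LIE} does not rely on such pointwise control of $f$ on $I_{s_o}$.

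The real gap is in your plan for \eqref{e22}. Splitting the sum over $k$ at $k_\ast\sim\log(\e+p')$ cannot turn the $p'$ loss into $\log(\e+p')$. Your Zygmund-based bound for each $k$ already gives $p'\,2^{(\frac p2-1)k}|G|^{1/p'}$, and the sum over \emph{all} $k\ge 0$ of $2^{(\frac p2-1)k}$ is uniformly bounded for $p\le 4/3$; restricting to $k\le k_\ast$ therefore gains nothing. On the other side, your proposed ``$p'$-free estimate for $k>k_\ast$ exploiting $L^\infty$ information on $f$ off $\widetilde{\I_{k-1}}$'' is not available either: the operators $\mathsf T^*_{\T_\ell(s_o)}g$ are supported on $I_{s_o}\subset\widetilde{\I_k}$, not off $\widetilde{\I_{k-1}}$, so the analogy with \eqref{badk} breaks down.

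What the paper does instead is introduce a \emph{second} decomposition parameter, the mass index $n$ from \cite{LIE2}: one writes $\S=\bigcup_n\S^n$ with $|E(s)|\le 2^{-n}|I_s|$ for $s\in\S^n$, and proves for each fixed $n$ the two-sided bound
\[
\Big|\Big\langle f,\sum_{k,s_o,\ell}\mathsf T^*_{\T_\ell^n(s_o)}g\Big\rangle\Big|\lesssim \min\big\{1,\;2^{-n/2}p'\big\}\,|G|^{1/p'}.
\]
The right-hand alternative is exactly your Zygmund argument, refined by the square-function estimate $\big\|\sum_{s_o}\cic 1_{I_{s_o}}S_{\S^n_{k,2}(s_o)}(g)\big\|_2\lesssim 2^{-n/2}|G|^{1/2}$ coming from the mass restriction. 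The crucial left-hand alternative (the $p'$-free bound) is \emph{not} an $L^\infty$ estimate on $f$ at all: it is the full Carleson machinery of \cite{LIE2} applied at mass level $n$, which gives an absolute bound independent of $p'$. Summing $\min\{1,2^{-n/2}p'\}$ over $n\ge 0$ then produces the $\log(\e+p')$. The missing idea in your proposal is precisely this mass decomposition and the invocation of the Carleson-type bound from \cite{LIE2} for the $p'$-free half of the minimum.
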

\begin{proof}[Proof of \eqref{e21}] Perusal (with minimal changes) of the proofs of \cite[Lemmata 1 and 2]{LIE}.
\end{proof}
\begin{proof}[Proof of \eqref{e22}] We perform here one last decomposition of our set of tiles $\S $. Referring to the mass decomposition recalled in \cite[Subsection 3.1]{LIE} (see also \cite[Section 5]{LIE2}), we write
$
\S= \bigcup_{n \in \mathbb N} \S^{n}, 
$
with (in particular) $|E(s)|\leq 2^{-n}|I_s|$ for all $s 
\in \S^n$. It is important to observe that the mass decomposition above is independent of $f$.

 For each $k,s_o \in \S_{k,o}, $ we set $\S_{k,2}^n(s_o):=\S_{k,2}(s_o)\cap \S^n $; as above, $\S_{k,2}^n(s_o)$ can be partitioned into a union of maximal trees 
${\T_\ell^n(s_o)}$ each with top frequency $\xi_\ell$. 
The sequence $\{\xi_\ell\}$ we obtain is actually a subsequence of the lacunary sequence defined in the previous section: we avoid the subsequence notation for simplicity. 
Estimate \eqref{e22} will be then obtained by summation over $n$ of the inequality
\begin{equation} \label{e222}
\Big| \Big\l f,  \sum_{k\geq 0} \sum_{s_{o}\in \S_{k,o}} \sum_{\ell }  \mathsf{T}^*_{\T_\ell^n(s_o)} g \Big\r \Big| \lesssim \min\{ 1, 2^{-\frac n 2} p'\} |G|^{\frac{1}{p'}}  .
\end{equation}
The left estimate in \eqref{e222} follows by repeating the proof of part (b) of the main theorem in \cite{LIE2} (with minimal modifications). From now on, we devote ourselves to the proof of the right estimate in \eqref{e222}.
Define the square function 
$$
S_ {\S_{k,2}^n(s_o)}( g) (x) := \Big( \sum_{\ell } |T^*_{\T_\ell^n(s_o)} (g) (x) |^2 \Big)^{\frac12}.
$$
It is a consequence of the analysis carried out in \cite{LIE2} that 
\begin{equation}
\label{square}
\Big\|\sum_{s_o \in \S_{k_o}} \cic{1}_{I_{s_o}} S_ {\S_{k,2}^n(s_o)} (g)  \Big\|_2 \lesssim 2^{-\frac n2}|G|^{\frac12}.
\end{equation}
In the same spirit of \cite[Lemma 3]{LIE}, we  have that, for $s_o \in \S_{k,o}$ we have the estimate
\begin{equation}
\label{singleso}
\Big| \Big\l f, \sum_{\ell} \mathsf{T}^*_{\T_\ell^n(s_o)} g \Big\r\Big| \lesssim p'2^{-k}|G|^{-\frac1p}|I_{s_o}|^{\frac12}\| \cic{1}_{I_{s_o}} S_ {\S_{k,2}^n(s_o)}(g) \|_{2}.
\end{equation}
To get \eqref{singleso}, the only modification to the proof of \cite[Lemma 3]{LIE} which we need is bounding  
$$
\Big(\frac{1}{|I_{s_0}|}\sum_{\ell } \Big|\textstyle \int_{I_{s_o}} f(y) \e^{i\xi_\ell y} \d y\Big|^2\Big)^{\frac12} \lesssim p'\|f\|_{L^p(I_{s_o})} \lesssim p' 2^{-k}|G|^{-\frac1p} 
$$
which is the scaled version of inequality \eqref{ZYG}. At this point, taking advantage of  \eqref{singleso},   subsequently making use of Cauchy-Schwarz together with disjointness in $\{s_o \in \S_{k,o}\}$ of the supports of $\cic{1}_{I_{s_o}} S_ {\S_{k,2}^n(s_o)} (g)$, following up  with \eqref{square} and finally relying on    \eqref{equationIk} in the last step,  we obtain
\begin{align*}
  \Big| \Big\l f, \sum_{s_o \in \S_{k_o}} \sum_{\ell} \mathsf{T}^*_{\T_\ell^n(s_o)} g \Big\r\Big| & \lesssim p'2^{-k}|G|^{-\frac1p}  \sum_{s_o \in \S_{k,o}}   |I_{s_o}|^{\frac12} \| \cic{1}_{I_{s_o}} S_ {\S_{k,2}^n(s_o)}(g) \|_{2}
 \\ &
  \leq   p'2^{-k}|G|^{-\frac1p} \Big( \sum_{s_o \in \S_{k,o}} |I_{s_o}| \Big)^{\frac12}  \Big\|\sum_{s_o \in \S_{k_o}} \cic{1}_{I_{s_o}} S_ {\S_{k,2}^n(s_o)} (g)  \Big\|_2 \\ &\lesssim p'2^{-k} 2^{-\frac n2}|G|^{\frac12-\frac1p}|\widetilde{\I_k}|^{\frac12} \lesssim   p' 2^{-\frac n2}|G|^{\frac{1}{p'}} 2^{(\frac p2 -1) k}.
\end{align*}
The  right estimate in \eqref{e222} finally follows by summing up over $k$ the above display.
\end{proof}This concludes the proof of Proposition \ref{bruttoproprio}, and in turn, of \eqref{e2}.
\bibliography{LacunaryCarleson-Fouriercase}{}
\bibliographystyle{amsplain}

\end{document}